\documentclass[amsppt,11pt]{amsart}
\usepackage{amsmath}
\usepackage{amsfonts}
\usepackage{amssymb}

\newcommand{\ep}{{\varepsilon}}

\newcommand{\intg}{{\mathbb Z}}
\newcommand{\cplx}{{\mathbb C}}
\newcommand{\ratl}{{\mathbb Q}}
\newcommand{\real}{{\mathbb R}}
\newcommand{\fifi}{{\mathbb F}_q}
\newcommand{\fifibar}{\bar{\mathbb F}_q}
\newcommand{\fifir}{{\mathbb F}_{q^r}}
\newcommand{\fifisq}{{\mathbb F}_{q^2}}

\newcommand{\Nm}{{\rm Nm}}

\newcommand{\bG}{{\bf G}}
\newcommand{\bT}{{\bf T}}
\newcommand{\bB}{{\bf B}}

\newcommand{\bN}{{\bf N}}
\newcommand{\U}{\mathrm{U}}
\newcommand{\GL}{\mathrm{GL}}

\newtheorem{thm}{Theorem}[section]
\newtheorem{lemma}{Lemma}[section]

\newtheorem{cor}{Corollary}[section]

\def\adots{\mathinner{\mkern2mu\raise0pt\hbox{.}  % antidiagonal dots
\mkern2mu\raise4pt\hbox{.}\mkern1mu
\raise7pt\vbox{\kern7pt\hbox{.}}\mkern1mu}}

\numberwithin{equation}{section}

\begin{document}

\bibliographystyle{amsplain}

\title[Duality, central characters, real-valued characters]
{Alvis-Curtis duality, central characters, and real-valued characters}
\author{C. Ryan Vinroot}
  \address{Mathematics Department\\
           The College of William and Mary\\
           Williamsburg, VA  23187-8795}
   \email{vinroot@math.wm.edu}

\maketitle

\begin{abstract}
We prove that Alvis-Curtis duality preserves the Frobenius-Schur indicators of characters of connected reductive groups of Lie type with connected center.  This allows us to extend a result of D. Prasad which relates the Frobenius-Schur indicator of a regular real-valued character to its central character.  We apply these results to compute the Frobenius-Schur indicators of certain real-valued, irreducible, Frobenius-invariant Deligne-Lusztig characters, and the Frobenius-Schur indicators of real-valued regular and semisimple characters of finite unitary groups.
\\
\\
2000 {\it AMS Subject Classification:}  20C33 (20G05)
\end{abstract}

\section{Introduction}

Given a finite group $G$, and an irreducible finite dimensional complex representation $(\pi, V)$ of $G$ with character $\chi$, it is a natural question to ask what smallest field extension of $\ratl$ is necessary to define a matrix representation corresponding to $(\pi, V)$.  If $\ratl(\chi)$ is the smallest extension of $\ratl$ containing the values of $\chi$, then the {\em Schur index} of $\chi$ over $\ratl$ may be defined to be the smallest degree of an extension of $\ratl(\chi)$ over which $(\pi, V)$ may be defined.  One may also consider the Schur index of $\chi$ over $\real$, which is $1$ if $(\pi, V)$ may be defined over $\real(\chi)$, and $2$ if it is not.  If $\chi$ is a real-valued character, then the Schur index of $\chi$ over $\real$ indicates whether $(\pi, V)$ may be defined over the real numbers.  The Brauer-Speiser Theorem states that if $\chi$ is a real-valued character, then the Schur index of $\chi$ over $\ratl$ is either $1$ or $2$, and if the Schur index of $\chi$ over $\real$ is $2$, then the Schur index of $\chi$ over $\ratl$ is $2$.

As finite groups of Lie type are of fundamental importance in the theory of finite groups, it is of interest to understand the Schur indices over $\ratl$ of their complex representations.  In work of Gow and Ohmori \cite{Gow1, Gow2, Gow3, Ohm1, Ohm2}, the Schur indices over $\ratl$ of many irreducible characters of finite classical groups are determined.  For the characters which are not covered by methods of Gow and Ohmori, it seems that the computation of the Schur index over $\ratl$ is significantly more difficult.  One example is that of the special linear group, which was completed by Turull \cite{turull}.  Through methods developed by Lusztig, Geck, and Ohomri, the Schur index of unipotent characters have been calculated in some cases \cite{Lusztig, Geck1, Geck2, Ohm3}.  From all of the evidence which is known, it has been conjectured that the Schur index over $\ratl$ of any irreducible character of a finite group of Lie type is at most $2$.

A somewhat more managable problem is to find the Schur index over $\real$ of real-valued characters of finite groups of Lie type.  Because of the Brauer-Speiser Theorem, this is directly related to the question of the Schur index over $\ratl$.  In many cases of classical groups, the Schur index over $\real$ of real-valued characters is completely known, as in \cite{Gow4}.  The Schur index over $\real$ can also be obtained from the Schur index over $\ratl$, as in many examples in \cite{Gow1, Ohm1}.  In many of these examples, the Schur index over $\real$ of a character is related to the value of the central character at a particular element.  In \cite{prasad}, Prasad studies the relationship between the Schur index over $\real$ and the central character for connected reductive groups of Lie type.  In particular, it is proven in \cite{prasad} that in the case that the underlying algebraic group has connected center, there is always a central element whose value on a central character of a real-valued character indicates the Schur index over $\real$, when that real-valued character appears in the Gelfand-Graev character.

The organization and main results of this paper are as follows.  In Section \ref{FS}, we gather results on real-valued characters and real representations of finite groups.  In Section \ref{Duality}, the main result is Theorem \ref{FSdual}, where we prove that the Frobenius-Schur indicator (and thus the Schur index over $\real$) of a character is preserved under Alvis-Curtis duality.  In Section \ref{CentralCharacters}, we apply Theorem \ref{FSdual} to extend the result of Prasad in \cite{prasad}, in Theorem \ref{FScentralExt}.  Finally, in Section \ref{Applications}, we use Theorem \ref{FScentralExt} to compute the Frobenius-Schur indicator of certain Frobenius-invariant Deligne-Lusztig characters (Theorem \ref{DLthm}), and the Frobenius-Schur indicators of regular and semisimple characters of finite unitary groups (Theorem \ref{FSunitary}).

\section{Frobenius-Schur indicators and real representations} \label{FS}

Let $G$ be a finite group, and let $(\pi, V)$ be an irreducible complex representation of $G$, with character $\chi$.  Frobenius and Schur defined $\ep(\pi) = 1/|G| \sum_{g \in G} \chi(g^2)$, called the {\em Frobenius-Schur indicator} of $\pi$, or of $\chi$, which they proved takes only the values $1, -1$, or $0$, and takes the value $\pm 1$ if and only if $\chi$ is real-valued.  We also write $\ep(\pi) = \ep(\chi)$.  Frobenius and Schur further proved that $\ep(\pi) = 1$ exactly when $\pi$ is a real representation, that is, when we can choose a basis for $V$ such that the matrix representation corresponding to $\pi$ with respect to this basis has image in ${\rm GL}(n, \real)$.  When $\ep(\pi)=1$, we say that $\pi$ is an {\em orthogonal} representation, and when $\ep(\pi) = -1$, we say $\pi$ is a {\em symplectic} representation (and also that the character of $\pi$ is an orthogonal or a symplectic character, respectively).

Now let $\chi$ be an irreducible character of a finite group $G$, and suppose $\ep(\chi) = 0$.  Then $\chi + \bar{\chi}$ is a real-valued character of $G$, and is in fact the character of a real representation of $G$, and this real representation is irreducible as a real representation (since its only complex irreducible subrepresentations are not real).  Similarly, if $\psi$ is an irreducible representation of $G$ such that $\ep(\psi) = -1$, then $2\psi$ is the character of a real representation (by taking the direct sum of a matrix representation corresponding to $\psi$ and its conjugate), and this real representation is irreducible as a real representation.  In particular, this gives us a basis of characters of real representations, as follows.

\begin{lemma} \label{realbasislemma} Let $G$ be a finite group.  If $\chi$ is the character of a real representation of $G$, then $\chi$ can be written uniquely as a non-negative integer linear combination of characters of the form:  $\theta$ where $\theta$ is an irreducible character and $\ep(\theta) = 1$, $2\psi$ where $\psi$ is an irreducible character and $\ep(\psi) = -1$, and $\eta + \overline{\eta}$ where $\eta$ is an irreducible character and $\ep(\eta) = 0$.
\end{lemma}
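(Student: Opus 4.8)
The plan is to produce the decomposition by starting from an honest real representation and complexifying, then reading off the constituents. Fix a representation $(\rho, W)$ of $G$ on a finite-dimensional real vector space with character $\chi$, and set $W_{\cplx} = W \otimes_{\real} \cplx$, which again has character $\chi$. Decomposing $W_{\cplx}$ into complex irreducibles and sorting the constituents by Frobenius--Schur indicator, write
\[
\chi \;=\; \sum_{\theta} m_{\theta}\,\theta \;+\; \sum_{\psi} n_{\psi}\,\psi \;+\; \sum_{\eta} p_{\eta}\,\eta ,
\]
where $\theta$ ranges over the irreducible characters with $\ep(\theta) = 1$, $\psi$ over those with $\ep(\psi) = -1$, $\eta$ over those with $\ep(\eta) = 0$, and all multiplicities are non-negative integers. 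Since $W$ is a real vector space, $\chi = \overline{\chi}$. Complex conjugation fixes each $\theta$ and each $\psi$ and partitions the $\eta$'s into conjugate pairs $\{\eta, \overline{\eta}\}$ with $\eta \neq \overline{\eta}$; comparing multiplicities in $\chi = \overline{\chi}$ gives $p_{\eta} = p_{\overline{\eta}}$, so that $\sum_{\eta} p_{\eta}\,\eta = \sum_{\{\eta, \overline{\eta}\}} p_{\eta}(\eta + \overline{\eta})$ is a non-negative integer combination of characters of the third listed type. (The fact that characters of each of the three types are genuinely characters of real representations was recorded in the discussion before the statement.)

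The crux is to show that $n_{\psi}$ is even for every symplectic $\psi$. For this, decompose $W$ itself into $\real G$-irreducible subrepresentations. If $U$ is such a constituent, Schur's lemma makes $D := \mathrm{End}_{\real G}(U)$ a finite-dimensional division algebra over $\real$, hence $D \cong \real$, $\cplx$, or $\mathbb{H}$. Computing $\mathrm{End}_{\cplx G}(U \otimes_{\real} \cplx) \cong D \otimes_{\real} \cplx$ shows that $U \otimes_{\real} \cplx$ is, in the three cases: a single irreducible $V$, which has a real form (namely $U$) and hence $\ep(V) = 1$; a sum $V \oplus \overline{V}$ with $V \not\cong \overline{V}$, hence $\ep(V) = 0$; or $V \oplus V$ with $V$ self-conjugate, which must be symplectic since a real form $U_{0}$ of $V$ would force $U \cong U_{0} \oplus U_{0}$, contradicting irreducibility of $U$. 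Summing these contributions over the $\real G$-irreducible constituents of $W$ exhibits $\chi$ as a non-negative integer combination of exactly the three stated types, and shows each symplectic $\psi$ is supplied only by constituents of the last kind, i.e.\ with even total multiplicity.

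For uniqueness, suppose $\chi = \sum m_{\theta}\theta + \sum n_{\psi}\psi + \sum_{\{\eta,\overline{\eta}\}} q_{\{\eta,\overline{\eta}\}}(\eta + \overline{\eta})$ is any expression of the prescribed form. Taking inner products with the irreducible characters and using orthonormality gives $m_{\theta} = \langle \chi, \theta \rangle$, $n_{\psi} = \tfrac{1}{2}\langle \chi, \psi \rangle$, and $q_{\{\eta,\overline{\eta}\}} = \langle \chi, \eta \rangle$, so every coefficient is determined by $\chi$. I expect the only real obstacle to be the evenness of $n_{\psi}$ --- equivalently, the statement that a symplectic irreducible has Schur index $2$ over $\real$; the trichotomy for $\real G$-irreducibles invoked above is classical and could instead simply be cited.
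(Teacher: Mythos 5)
Your proof is correct and takes essentially the approach the paper relies on: the paper states the lemma as an immediate consequence of the classical trichotomy of irreducible real representations sketched in the preceding paragraph, and your endomorphism-algebra computation ($D\cong\real$, $\cplx$, or $\mathbb{H}$) is exactly the fully detailed version of that trichotomy, with the evenness of the symplectic multiplicities and the uniqueness handled correctly.
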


We may take advantage of the parity implications in Lemma \ref{realbasislemma},  as in the following. 

\begin{lemma} \label{linearcomblemma} Suppose that $\chi$ is an irreducible complex character, and is an integer linear combination of characters of real representations.  Then $\ep(\chi)=1$.
\end{lemma}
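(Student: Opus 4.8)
The plan is to reduce everything to the uniqueness of the decomposition of a (virtual) character into irreducibles, exploiting the parity built into Lemma \ref{realbasislemma}. Write $\chi = \sum_i a_i \rho_i$ with $a_i \in \intg$ and each $\rho_i$ the character of a real representation of $G$. By Lemma \ref{realbasislemma}, each $\rho_i$ is a non-negative integer linear combination of characters of the three types $\theta$ (irreducible, $\ep(\theta)=1$), $2\psi$ (irreducible $\psi$, $\ep(\psi)=-1$), and $\eta + \overline{\eta}$ (irreducible $\eta$, $\ep(\eta)=0$). Substituting and collecting terms, $\chi$ becomes an \emph{integer} linear combination of characters of these three types, say
\[
\chi = \sum_{\ep(\theta)=1} c_\theta \, \theta \;+\; \sum_{\ep(\psi)=-1} d_\psi \,(2\psi) \;+\; \sum_{\ep(\eta)=0} e_\eta\,(\eta + \overline{\eta}),
\]
where the sums run over (representatives of) the relevant irreducible characters and $c_\theta, d_\psi, e_\eta \in \intg$.

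Next I would compare this with the unique expression of $\chi$ as a $\intg$-linear combination of irreducible characters. Since $\chi$ is itself irreducible, that expression is simply $\chi$ with coefficient $1$ and every other irreducible with coefficient $0$. Now run through the three possibilities for $\ep(\chi)$. If $\ep(\chi) = -1$, then $\chi$ can occur on the right-hand side only among the $2\psi$ terms (the other two types have indicator $1$ or $0$), so its coefficient there is $2 d_\chi$, an even integer, which cannot equal $1$. If $\ep(\chi)=0$, then $\chi$ occurs on the right-hand side only inside terms of the form $\eta + \overline{\eta}$, and each such term contributes equally to the coefficient of $\chi$ and to that of $\overline{\chi}$; hence $\chi$ and $\overline{\chi}$ have equal coefficients on the right-hand side. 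But $\ep(\chi) = 0$ forces $\overline{\chi} \neq \chi$, so on the left-hand side $\chi$ has coefficient $1$ while $\overline{\chi}$ has coefficient $0$, a contradiction. Since $\ep(\chi) \in \{1,-1,0\}$, the only surviving possibility is $\ep(\chi) = 1$.

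This is a parity-and-bookkeeping argument, so I do not expect a real obstacle; the one point deserving care is that the hypothesis permits \emph{arbitrary} integer coefficients $a_i$, so one may not invoke the uniqueness clause of Lemma \ref{realbasislemma} (which concerns non-negative combinations). Instead one uses only the existence part of Lemma \ref{realbasislemma}, together with the elementary uniqueness of the decomposition of any virtual character into irreducibles.
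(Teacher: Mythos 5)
Your proof is correct and follows essentially the same parity argument as the paper: decompose each real character via Lemma \ref{realbasislemma} into the three types, then observe that any irreducible with indicator $-1$ must end up with an even coefficient and any irreducible with indicator $0$ must have the same coefficient as its conjugate, which is incompatible with $\chi$ being irreducible. Your version is somewhat more carefully written than the paper's (which phrases the same idea as a count of characters with $\ep \neq 1$ being even), and your remark that only the existence part of Lemma \ref{realbasislemma} plus linear independence of irreducible characters is needed is a sound clarification.
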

\begin{proof} Let $\chi = \sum_i a_i \theta_i$, where $a_i \in \intg$ and each $\theta_i$ is the character of a real representation.  Writing each $\theta_i$ as a decomposition of characters of irreducible complex representations, the total number of characters $\psi$ such that $\ep(\psi) = 0$ or $\ep(\psi)=-1$ appearing is even, from Lemma \ref{realbasislemma}.  Since $\chi$ is irreducible, we must have $\ep(\chi) = 1$.
\end{proof}

Finally, we note the following useful fact, which follows from the defintion of restriction and induction.  If $\pi$ is a representation of $G$ with character $\chi$, and $\rho$ is a representation of a subgroup $H$ of $G$ with character $\psi$, we write $\pi|_H$ for the restriction of $\pi$ to $H$, with character $\chi|_H$, and $\rho^G$ for the induced representation of $\rho$ to $G$, with character $\psi^G$.

\begin{lemma} \label{inducerestrictlemma} Let $G$ be a finite group and $H$ a subgroup of $G$.  If $\pi$ is a real representation of $G$, then the restriction $\pi|_H$ is a real representation of $H$.  If $\rho$ is a real representation of $H$, then the induced representation $\rho^G$ is a real representation of $G$.
\end{lemma}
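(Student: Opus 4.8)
The plan is to work directly with matrix representations, as the parenthetical hint before the statement suggests; no character theory is needed. For the restriction claim, suppose $\pi$ is a real representation of $G$ acting on $V$, so that by definition there is a basis of $V$ with respect to which every matrix $\pi(g)$, $g \in G$, lies in $\GL(n,\real)$. Since $H$ is a subgroup of $G$, the matrices $\pi(h)$ for $h \in H$ are among these, so the very same basis exhibits $\pi|_H$ as a real matrix representation of $H$. This disposes of the first assertion with no further work.

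For the induction claim, the one thing to do is to write down an explicit real model of $\rho^G$. Let $\rho$ act on $W$, and fix a basis $w_1,\dots,w_n$ of $W$ in which every $\rho(h)$, $h \in H$, is a real matrix. Choose left coset representatives $g_1,\dots,g_m$ for $H$ in $G$, so that $\rho^G$ may be realized on the space with basis $\{\, g_i \otimes w_j : 1 \le i \le m,\ 1 \le j \le n \,\}$. For $g \in G$ and each $i$ one has $g g_i = g_{\sigma(i)} h_i$, where $\sigma$ is a permutation of $\{1,\dots,m\}$ and $h_i \in H$ (both depending on $g$), whence $g \cdot (g_i \otimes w_j) = g_{\sigma(i)} \otimes \rho(h_i) w_j$. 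Thus, in the ordered basis above, $\rho^G(g)$ is the $mn \times mn$ block matrix that permutes $m$ blocks of size $n$ according to $\sigma$ and places the real matrix $\rho(h_i)$ in block position $(\sigma(i), i)$. In particular $\rho^G(g) \in \GL(mn, \real)$ for every $g \in G$, so $\rho^G$ is a real representation of $G$.

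I do not expect any genuine obstacle here: the entire content is unwinding the definitions of restriction and induction. The only point that requires a moment's care is expressing the $G$-action on the induced space in the chosen basis and then observing that the resulting matrices have real entries precisely because the ``internal'' blocks $\rho(h_i)$ do.
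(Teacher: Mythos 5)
Your proof is correct and is exactly the argument the paper has in mind: the paper offers no proof at all, remarking only that the lemma ``follows from the definition of restriction and induction,'' and your explicit unwinding of those definitions (same basis for restriction; the block-permutation matrix with real blocks $\rho(h_i)$ for induction) is the standard way to make that remark precise.
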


\section{Alvis-Curtis duality and the Frobenius-Schur indicator}  \label{Duality}

In this section, we first let $G$ be a finite group, $P$ a subgroup of $G$, and $N$ a normal subgroup of $P$.  Given a character $\chi$ of $G$, we define a character of $P$ by {\it truncation} with respect to $N$, defined in \cite{alvis1,curt,Ka81}.  In particular, the character $T_{P}(\chi)$, or $T_{P/N}(\chi)$, is defined to be the character of $P$ obtained by first restricting $\chi$ to $P$, and then taking the sum of irreducible constituents of $\chi|_P$ which are characters of representations such that $N$ acts trivially.  By extending the definition linearly, we have that $T_{P/N}$ maps class functions of $G$ to class functions of $P$.  Note that since $N$ is acting trivially, we may also view $T_{P/N}(\chi)$ as a class function on $P/N$.  In terms of character values, we have the formula
\begin{equation} \label{truncform}
T_{P/N}(\chi)(h) = \frac{1}{|N|} \sum_{x \in N} \chi(xh), \quad h \in P.
\end{equation}
As the next result states, truncation preserves the property of being the character of a real representation. 

\begin{lemma} \label{truncreal} If $\chi$ is the character of a real representation of $G$, then $T_{P/N}(\chi)$ is the character of a real representation of $P$.\end{lemma}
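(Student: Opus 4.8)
The plan is to realize $T_{P/N}(\chi)$ concretely as the character of the $P$-action on the space of $N$-fixed vectors of $\pi$, and then show that this fixed space carries a real structure coming from the one on $V$.

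First I would fix a realization of $(\pi,V)$ over $\real$: choose a basis of $V$ for which $\pi(g)\in\GL(n,\real)$ for every $g\in G$, and write $V_\real=\real^n\subseteq V$ for the real span of this basis. In particular this realizes $\pi|_P$ over $\real$, which is Lemma \ref{inducerestrictlemma}, but I will need the sharper statement about the $N$-invariants. Next, consider the averaging idempotent $e=\frac{1}{|N|}\sum_{x\in N}\pi(x)$, whose image is $V^{N}$. Using $N\trianglelefteq P$, each $P$-irreducible constituent $V_i$ of $\pi|_P$ has $V_i^{N}$ equal to a $P$-submodule of $V_i$, hence $0$ or $V_i$, so $V^{N}=\bigoplus V_i$ with the sum over those constituents on which $N$ acts trivially; therefore the $P$-character of $V^{N}$ is precisely $T_{P/N}(\chi)$. (This is also consistent with formula \eqref{truncform}, since $\operatorname{tr}\!\big(\pi(h)e\big)=\frac{1}{|N|}\sum_{x\in N}\chi(hx)$ and $hx$ is conjugate to $xh$.) A one-line computation using normality of $N$ in $P$ shows that $\pi(p)$ commutes with $e$ for all $p\in P$.

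Finally I would observe that the matrix of $e$ has real (indeed rational) entries, so $eV_\real$ is a real subspace whose real dimension equals the rank of $e$, which equals $\dim_\cplx V^{N}$; hence $eV_\real$ is a real form of $V^{N}$. Since $\pi(p)$ preserves $V_\real$ and commutes with $e$, it preserves $eV_\real$, so $eV_\real$ is a $P$-stable real subspace affording the $P$-character $T_{P/N}(\chi)$. This exhibits $T_{P/N}(\chi)$ as the character of a real representation of $P$, as desired.

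I do not expect a serious obstacle here. The only point needing care is the bookkeeping in the middle step — verifying that taking $N$-invariants selects exactly the constituents on which $N$ acts trivially, so that the $P$-module $V^{N}$ has character $T_{P/N}(\chi)$; this is where the hypothesis $N\trianglelefteq P$ enters. The remaining input is the elementary fact that an idempotent with real entries cuts out a real form of its image, together with the commutation $\pi(p)e=e\pi(p)$.
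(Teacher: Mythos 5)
Your proof is correct, but it takes a genuinely different route from the paper's. The paper argues purely at the level of characters: it first invokes Lemma \ref{inducerestrictlemma} to see that $\chi|_P$ is the character of a real representation, then uses Lemma \ref{realbasislemma} to note that symplectic constituents occur in $\chi|_P$ with even multiplicity and that quaternionic/complex-conjugate pairs $\eta, \overline{\eta}$ occur with equal multiplicity; since whether $N$ acts trivially is a property of the isomorphism class of a constituent (and is preserved under complex conjugation), truncation keeps or discards all copies of a given constituent at once, so these parity and pairing conditions persist in $T_{P/N}(\chi)$, which is therefore again the character of a real representation by Lemma \ref{realbasislemma}. You instead construct an explicit real form: the averaging idempotent $e=\frac{1}{|N|}\sum_{x\in N}\pi(x)$ has real matrix entries in a real basis for $V$, commutes with $\pi(P)$ by normality of $N$ in $P$, and its image $V^N$ carries the $P$-character $T_{P/N}(\chi)$, so $eV_\real$ is a $P$-stable real form of $V^N$. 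Your module-theoretic argument is more constructive and self-contained (it bypasses Lemma \ref{realbasislemma} entirely and yields an actual real structure on the truncated module), while the paper's argument is the one that meshes with the machinery it has already set up and reuses in Lemma \ref{linearcomblemma} and Theorem \ref{FSdual}. All the individual steps you flag --- that $V_i^N$ is $0$ or $V_i$ for each irreducible $P$-constituent, that $\operatorname{rank}$ of a real idempotent matrix is the same over $\real$ and $\cplx$ so that $eV_\real$ really is a real form of $eV$, and the commutation $\pi(p)e=e\pi(p)$ --- check out.
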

\begin{proof} First, the restriction $\chi|_P$ is the character of a real representation of $P$, by Lemma \ref{inducerestrictlemma}.  Now consider the irreducible constituents of $\chi|_P$ which are characters of representations such that $N$ acts trivially.  If $\xi$ is the character of such a representation, and $\ep(\xi) = 0$, then certainly $\bar{\xi}$ is also such a character.  Because $\chi|_P$ is the character of a real representation, the irreducible characters $\psi$ of $P$ such that $\ep(\psi) = -1$ must occur with even multiplicity as constituents of $\chi|_P$, by Lemma \ref{realbasislemma}.  Thus, such characters must appear with even multiplicity in $T_{P/N}(\chi)$.  It follows from Lemma \ref{realbasislemma} that $T_{P/N}(\chi)$ is the character of a real representation of $P$.   
\end{proof} 

Let $\fifi$ be a finite field of $q$ elements, where $q$ is a power of the prime $p$, and let $\bar{\mathbb{F}}_q$ be a fixed algebraic closure of $\fifi$.  Now let ${\bf G}$ be a connected reductive group over $\bar{\mathbb{F}}_q$ which is defined over $\fifi$ and which has connected center, and let $F$ be a Frobenius map.  Let $W$ be the Weyl group of ${\bf G}$, where $W = \langle s_i \mid i \in I \rangle$, and let $\rho$ be the permutation of the indexing set $I$ which is induced by the action of the Frobenius map $F$.  For any $\rho$-stable subset $J \subseteq I$, let ${\bf P}_J$ be the parabolic subgroup of ${\bf G}$ corresponding to $W_J = \langle s_j \mid j \in J \rangle$, and let ${\bf N}_J$ be the unipotent subgroup.  Let $P_J = {\bf P}_J^F$ and $N_J = {\bf N}_J^F$ be the corresponding parabolic and unipotent subgroups of the finite group $G = {\bf G}^F = {\bf G}(\fifi)$.  Define the following operator $*$ on the set of virtual characters of $G$:
$$\chi^* = \sum_{ J \subseteq I \atop {\rho(J) = J}} (-1)^{|J/\rho|} (T_{P_J/N_J}(\chi))^G.$$
The operator $*$ and its properties were studied in \cite{alvis1, alvis2, alvis3, curt, Ka81}.  In particular, if $\chi$ is an irreducible character of $G$, then $\pm \chi^*$ is an irreducible character of $G$.  By a slight abuse of notation, we let $\pm \chi^*$ denote the appropriate sign taken to get an irreducible character.  Another key important property of $*$ is as follows, which is why $*$ is called the {\em Alvis-Curtis duality} operator.

\begin{thm} [Curtis, Alvis, Kawanaka]  
The map $\chi \mapsto \chi^*$ is an order 2 isometry of the virtual characters of $G$, so that $\chi^{**} = \chi$ and $\langle \chi, \psi \rangle = \langle \chi^*, \psi^* \rangle$ for all virtual characters $\chi, \psi$ of $G$.
\end{thm}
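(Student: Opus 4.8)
The plan is to recognize the operator $*$ as $D_G := \sum_{J \subseteq I,\, \rho(J)=J} (-1)^{|J/\rho|}\, R_{L_J}^G \circ {}^{*}R_{L_J}^G$, where $L_J = P_J/N_J$, the map ${}^{*}R_{L_J}^G$ sends a virtual character of $G$ to its truncation $T_{P_J/N_J}(-)$ regarded as a virtual character of $L_J$, and $R_{L_J}^G$ sends a virtual character of $L_J$ to its inflation to $P_J$ induced up to $G$. With this reading $(T_{P_J/N_J}(\chi))^G = R_{L_J}^G({}^{*}R_{L_J}^G(\chi))$, so $D_G$ is exactly the operator in the statement. The theorem then reduces to two assertions: (i) $D_G$ is self-adjoint for $\langle\,,\,\rangle$; and (ii) $D_G^2 = \mathrm{id}$. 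Granting both, the isometry property is formal, since $\langle \chi^{*},\psi^{*}\rangle = \langle \chi, D_G^2\psi\rangle = \langle\chi,\psi\rangle$, while $\chi^{**}=\chi$ is simply a restatement of (ii).

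For (i), the key input is the adjunction $\langle R_{L_J}^G(\alpha),\beta\rangle_G = \langle \alpha, {}^{*}R_{L_J}^G(\beta)\rangle_{L_J}$. I would deduce this from Frobenius reciprocity for $(P_J,G)$ together with the observation — immediate from \eqref{truncform} — that $T_{P_J/N_J}$ computes the character of the $N_J$-fixed subspace, so that the pairing against a character inflated from $L_J$ (hence trivial on $N_J$) is unchanged by truncation. Each summand $R_{L_J}^G \circ {}^{*}R_{L_J}^G$ of $D_G$ is then self-adjoint, and since the signs $(-1)^{|J/\rho|}$ are real, so is $D_G$; in particular $\langle \chi^{*},\psi\rangle = \langle\chi,\psi^{*}\rangle$ for all virtual characters.

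For (ii), the plan is to expand $D_G^2$ and simplify the mixed composition ${}^{*}R_{L_J}^G \circ R_{L_K}^G$ using two structural facts: transitivity of Harish-Chandra induction and of truncation ($R_{L_J}^G \circ R_{L_M}^{L_J} = R_{L_M}^G$ for $M \subseteq J$, and dually for ${}^{*}R$), and a Mackey-type formula rewriting ${}^{*}R_{L_J}^G \circ R_{L_K}^G$ as a sum over appropriate $\rho$-stable $(W_J,W_K)$-double-coset representatives $w$ of terms of the shape $R^{L_J}_{L_{J\cap w(K)}} \circ \mathrm{ad}(w) \circ {}^{*}R^{L_K}_{L_{K\cap w^{-1}(J)}}$. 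Substituting this into $D_G^2$ and regrouping via transitivity, one arrives at an expression $\sum_{M}\, c_M\, R_{L_M}^G \circ {}^{*}R_{L_M}^G$, where each coefficient $c_M$ is an alternating sum of signs indexed by the pairs $(J,K)$ (and contributing $w$) that collapse onto $M$. Everything then comes down to the combinatorial identity $c_I = 1$ and $c_M = 0$ for $M \subsetneq I$, which is an Euler-characteristic computation on the poset of $\rho$-stable subsets of $I$ — equivalently, on the $\rho$-fixed subcomplex of the Coxeter complex of $W$.

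The hard part will be precisely this last cancellation in the twisted setting where $\rho$ is nontrivial. For $\rho = \mathrm{id}$ it is the identity underlying Alvis's original argument, and I would treat that case first in order to isolate the purely combinatorial content; the passage to general $\rho$, keeping track of the orbit-counting signs $|J/\rho|$ and choosing the double-coset representatives $w$ $\rho$-equivariantly, is Kawanaka's refinement, and is where the bookkeeping is most delicate. (Alternatively one could invoke the Deligne--Lusztig description of $D_G$ through alternating sums of cohomology of Deligne--Lusztig varieties, but the combinatorial route keeps the argument internal to ordinary character theory.) Once (ii) is in hand, (i) upgrades it to the full isometry statement $\langle\chi,\psi\rangle = \langle\chi^{*},\psi^{*}\rangle$.
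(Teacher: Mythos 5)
The paper does not prove this theorem at all: it is stated as an attributed result of Curtis, Alvis, and Kawanaka, with the proof delegated to the cited references \cite{alvis1, alvis2, alvis3, curt, Ka81}. So the relevant comparison is between your outline and the arguments in that literature, and there your plan is the standard one: write $*$ as $\sum_J (-1)^{|J/\rho|} R_{L_J}^G \circ {}^{*}R_{L_J}^G$, get self-adjointness of each summand from the adjunction between Harish-Chandra induction and truncation (which does follow from Frobenius reciprocity plus the observation that truncation is projection onto the $N_J$-fixed part), and reduce $D_G^2 = \mathrm{id}$ to a Mackey formula followed by a sign cancellation over $\rho$-stable subsets. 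Your identification of which case is Alvis/Curtis ($\rho = \mathrm{id}$) and which is Kawanaka's twisted refinement is also accurate.

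As a proof, however, the proposal stops exactly where the content begins, and you say so yourself. Two things are asserted rather than established. First, the Mackey-type formula for ${}^{*}R_{L_J}^G \circ R_{L_K}^G$ in the Harish-Chandra setting: this is true, but it requires the Bruhat decomposition of $P_J\backslash G / P_K$ and a careful analysis showing that only the distinguished ($F$-stable) double coset representatives contribute and that the unipotent parts integrate out correctly; it is not a formal consequence of the ordinary Mackey formula for the finite groups $P_J$, $P_K$. Second, and more seriously, the identity $c_I = 1$, $c_M = 0$ for $M \subsetneq I$ is the theorem. It is not the trivial inclusion-exclusion $\sum_{J \subseteq K}(-1)^{|K|}=0$, because the pairs $(J,K,w)$ collapsing onto a given $M$ are constrained by the double-coset combinatorics (which $w$ satisfy $J \cap w(K) = M$ with $w$ distinguished), and in the twisted case the exponent is the number of $\rho$-orbits rather than the cardinality. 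Curtis's and Alvis's proofs spend most of their effort here, and Kawanaka's extension to general $\rho$ is a separate argument. Until that cancellation is carried out (or explicitly quoted from the literature, as the paper itself does), the proposal is a correct roadmap but not a proof.
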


It follows from the definition of $*$ that it preserves real-valued characters, since $(\overline{\chi})^* = \overline{\chi^*}$.  We now prove the result that duality in fact preserves the property of being the character of a real representation, or preserves the Frobenius-Schur indicator.

\begin{thm} \label{FSdual} Let $\chi$ be an irreducible character of $G = {\bf G}(\fifi)$.  Then $\ep(\chi) = \ep(\pm\chi^*)$.
\end{thm}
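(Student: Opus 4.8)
The plan is to reduce everything to the three preparatory lemmas of Section \ref{FS} together with the formula defining $*$. The key observation is that Lemma \ref{linearcomblemma} permits \emph{integer} (not merely non-negative) linear combinations, so the signs $(-1)^{|J/\rho|}$ appearing in the definition of $\chi^*$ are harmless.

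First I would establish one implication: if $\ep(\chi) = 1$, that is, $\chi$ is the character of a real representation of $G$, then $\ep(\pm\chi^*) = 1$. Fix a $\rho$-stable subset $J \subseteq I$. By Lemma \ref{truncreal}, $T_{P_J/N_J}(\chi)$ is the character of a real representation of $P_J$, and then by Lemma \ref{inducerestrictlemma} the induced character $(T_{P_J/N_J}(\chi))^G$ is the character of a real representation of $G$. Hence $\chi^* = \sum_{J} (-1)^{|J/\rho|} (T_{P_J/N_J}(\chi))^G$, and therefore also $\pm\chi^*$, is an integer linear combination of characters of real representations of $G$. Since $\pm\chi^*$ is irreducible, Lemma \ref{linearcomblemma} gives $\ep(\pm\chi^*) = 1$.

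Next, the converse implication follows formally from $\chi^{**} = \chi$: applying what was just proved to the irreducible character $\pm\chi^*$ in place of $\chi$, and using $(\pm\chi^*)^* = \pm\chi$, we see that $\ep(\pm\chi^*) = 1$ implies $\ep(\chi) = 1$. Thus $\ep(\chi) = 1$ if and only if $\ep(\pm\chi^*) = 1$. For the remaining values, recall that $(\overline{\chi})^* = \overline{\chi^*}$, so $\chi$ is real-valued precisely when $\pm\chi^*$ is; equivalently, $\ep(\chi) = 0$ if and only if $\ep(\pm\chi^*) = 0$. Combining the two equivalences forces $\ep(\chi) = -1$ if and only if $\ep(\pm\chi^*) = -1$ as well, and the three cases together yield $\ep(\chi) = \ep(\pm\chi^*)$.

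I do not expect a serious obstacle here; the genuine content is already carried by Lemma \ref{truncreal}, whose proof rests on the parity bookkeeping of the real-character basis in Lemma \ref{realbasislemma}. The one point that requires a moment's care is that Lemma \ref{linearcomblemma} is being applied to $\pm\chi^*$ rather than to $\chi^*$ itself; this is legitimate because negating an integer combination of characters of real representations again yields such a combination, and because $*$ is known to carry an irreducible character to plus or minus an irreducible character.
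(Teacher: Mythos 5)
Your proposal is correct and follows essentially the same route as the paper: show that $\ep(\chi)=1$ forces $\ep(\pm\chi^*)=1$ via Lemmas \ref{truncreal}, \ref{inducerestrictlemma}, and \ref{linearcomblemma}, then use $\chi^{**}=\chi$ and $(\overline{\chi})^*=\overline{\chi^*}$ to cover the remaining cases. Your write-up merely makes the final case analysis slightly more explicit than the paper's one-line reduction.
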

\begin{proof} Since $(\overline{\chi})^* = \overline{\chi^*}$, and $\chi^{**} = \chi$, it is enough to show that if $\ep(\chi) = 1$, then $\ep(\pm\chi^*) = 1$.  So, suppose that $\chi$ is an irreducible character of $G$ corresponding to a real representation of $G$.  From Lemma \ref{truncreal}, it follows that $T_{P_J/N_J}(\chi)$ is the character of a real representation of $P_J$ for each $P_J$, and so each induced representation $(T_{P_J/N_J}(\chi))^G$ is the character of a real representation of $G$, by Lemma \ref{inducerestrictlemma}.  Now, in the sum
$$\chi^* = \sum_{ J \subseteq I \atop {\rho(J) = J}} (-1)^{|J/\rho|} (T_{P_J/N_J}(\chi))^G,$$
each induced character is the character of a real representation.  So, the irreducible character $\pm \chi^*$ is the integer linear combination of characters of real representations.  By Lemma \ref{linearcomblemma}, we have $\ep(\pm\chi^*) = 1$.
\end{proof}

Recall that the Gelfand-Graev character of $G$, which we will denote by $\Gamma$, is the character of the representation obtained by inducing a non-degenerate linear character from the unipotent subgroup of $G$ up to $G$ (see \cite[Section 8.1]{Ca85} for a full discussion).  A {\em regular} character of $G$ is defined as an irreducible character of $G$ which appears as a constituent of $\Gamma$.  It is well known that the Gelfand-Graev character has a multiplicity-free decomposition into irreducible characters of $G$.  We define the virtual character $\Xi$ of $G$ to be the dual of $\Gamma$, that is, $\Xi = \Gamma^*$.  A {\em semisimple} character of $G$ may be defined as an irreducible character $\chi$ of $G$ such that $\langle \Xi, \chi \rangle \neq 0$.  Equivalently, a semisimple character of $G$ may be defined to be an irreducible character with the property that its average value on regular unipotent elements is nonzero.  In the case that $p$ is a good prime for $G$ (see \cite[Section 1.14]{Ca85} for a definition), the semisimple characters of $G$ are exactly those for which the degree is not divisible by $p$.  

Alvis-Curtis duality maps a regular character of $G$ to plus or minus a semisimple character, and a semisimple character of $G$ to plus or minus a regular character (see \cite[Section 8.3]{Ca85}).  This gives us the following Corollary of Theorem \ref{FSdual}.

\begin{cor} The number of orthogonal (resp. symplectic) regular characters of $G = {\bf G}(\fifi)$ is equal to the number of orthogonal (resp. symplectic) semisimple characters of $G$.
\end{cor}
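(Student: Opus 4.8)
The plan is to combine the two facts we already have in hand. First, by the Curtis--Alvis--Kawanaka theorem the map $\chi \mapsto \pm\chi^*$ is an involution on the set $\mathrm{Irr}(G)$ of irreducible characters of $G$: it carries an irreducible character to an irreducible character, and it squares to the identity since $\chi^{**} = \chi$. Second, by the discussion immediately preceding the corollary together with \cite[Section 8.3]{Ca85}, this involution interchanges the set of regular characters of $G$ with the set of semisimple characters of $G$. Consequently $\chi \mapsto \pm\chi^*$ restricts to a bijection between the regular characters of $G$ and the semisimple characters of $G$.

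Next I would feed in Theorem \ref{FSdual}: for every irreducible $\chi$ we have $\ep(\chi) = \ep(\pm\chi^*)$. Thus the bijection of the previous paragraph preserves the Frobenius--Schur indicator, so it restricts further to a bijection between the orthogonal regular characters (those with $\ep = 1$) and the orthogonal semisimple characters, and to a bijection between the symplectic regular characters (those with $\ep = -1$) and the symplectic semisimple characters. Comparing cardinalities on the two sides then yields the two asserted equalities simultaneously.

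In other words, the corollary is a formal consequence of Theorem \ref{FSdual} and the known behaviour of $*$ on regular and semisimple characters, so there is essentially no obstacle to overcome. The only points worth a word of care are that $\pm\chi^*$ is a genuinely well-defined irreducible character, so that the relevant bijection makes sense as stated, and that the interchange of regular and semisimple characters holds in both directions; both are recorded in the references cited above, in particular in \cite{Ca85}. Alternatively, one may avoid speaking of indicators of individual characters and argue directly from $\Gamma^* = \Xi$: since $*$ is an isometry, it matches the (multiplicity-free) constituents of the Gelfand--Graev character $\Gamma$ with those of $\Xi$, and Theorem \ref{FSdual} shows this matching respects the splitting of each side into orthogonal and symplectic parts.
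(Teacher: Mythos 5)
Your proposal is correct and matches the paper's reasoning: the corollary is deduced exactly by noting that $\chi \mapsto \pm\chi^*$ interchanges regular and semisimple characters (citing \cite[Section 8.3]{Ca85}) and, by Theorem \ref{FSdual}, preserves the Frobenius--Schur indicator. The paper gives no further detail, so your slightly more explicit write-up is simply a fuller version of the same argument.
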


\section{Central characters} \label{CentralCharacters}

For an irreducible representation $\pi$, with character $\chi$, of a finite group, let $\omega_{\pi}$, or $\omega_{\chi}$, denote the central character of $\pi$ given by Schur's Lemma.  

As in the previous section, here we let $\bG$ be a connected reductive group over $\fifibar$ with connected center, which is defined over $\fifi$ with Frobenius map $F$.  In \cite[Theorem 3]{prasad}, D. Prasad obtains the following result connecting the Frobenius-Schur indicator of a regular character of $G$ to the value of the central character at a specific element.

\begin{thm}[Prasad] \label{prasaddual} Let ${\bf G}$ be a connected reductive group over $\bar{\mathbb{F}}_q$ which is defined over $\fifi$, with Frobenius map $F$, and such that the center $Z({\bf G})$ is connected.  Then there exists an element $z$ in the center of ${\bf G}^F = G$ such that for any real-valued regular character $\chi$ of $G$, we have $\ep(\chi) = \omega_{\chi}(z)$.
\end{thm}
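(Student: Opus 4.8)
The plan is to realize $\Gamma$ as a genuine representation $V=\mathrm{Ind}_U^G(\psi)$, where $U=\bU^F$ is a maximal unipotent subgroup of $G$ normalized by $T=\bT^F$ and $\psi$ is a non-degenerate linear character of $U$, and then to exhibit on $V$ a single $G$-invariant non-degenerate bilinear form $B$ whose symmetry type on each real-valued irreducible constituent is governed by the value of the central character at one fixed central element $z$. Since $\Gamma=\sum_{\chi\ \mathrm{regular}}\chi$ is multiplicity-free, each regular $\chi$ occurs in $V$ as a single irreducible summand $V_\chi$. A $G$-invariant bilinear pairing $V_\chi\times V_{\chi'}\to\cplx$ is nonzero only when $V_{\chi'}\cong V_\chi^*$, so if $\chi$ is moreover real-valued then $V_\chi\cong V_\chi^*$ and multiplicity-freeness makes $V_\chi$ orthogonal to every other summand of $V$ under $B$; hence $B|_{V_\chi}$ is non-degenerate and $G$-invariant, and it is symmetric precisely when $\ep(\chi)=1$ and alternating precisely when $\ep(\chi)=-1$.

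To construct $B$, recall that $V^*\cong\mathrm{Ind}_U^G(\psi^{-1})$ via the canonical perfect pairing $B_0(f_1,f_2)=\sum_{x\in U\backslash G}f_1(x)f_2(x)$, which is well defined because $\psi\cdot\psi^{-1}$ is trivial on $U$. The hypothesis that $Z(\bG)$ is connected enters precisely here: it is equivalent to $T$ acting transitively on the set of non-degenerate characters of $U$ (the reason connected center yields a unique Gelfand--Graev representation), so there is an element $t_0\in T$ with ${}^{t_0}\psi=\psi^{-1}$. Left translation by $t_0$ then gives a $G$-equivariant isomorphism $\Phi\colon V\to\mathrm{Ind}_U^G(\psi^{-1})$, $\Phi(f)(g)=f(t_0^{-1}g)$, and I set
\[
B(f_1,f_2)=B_0(f_1,\Phi(f_2))=\sum_{x\in U\backslash G}f_1(x)\,f_2(t_0^{-1}x),
\]
a $G$-invariant non-degenerate bilinear form on $V$.

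Next I would identify $z$ and compute the symmetry of $B$. Testing the identity ${}^{t_0}\psi=\psi^{-1}$ on each simple root subgroup $U_\alpha$ shows $\alpha(t_0)=-1$ for every simple root $\alpha$; since every root of $\bG$ is an integral combination of the simple ones, $z:=t_0^{2}$ satisfies $\beta(z)=1$ for all roots $\beta$, i.e.\ $z\in Z(\bG)$, and as $t_0\in T$ we get $z\in Z(\bG)^F\subseteq Z(G)$. Substituting $x\mapsto t_0x$ in the sum defining $B$ gives $B(f_2,f_1)=\sum_{x\in U\backslash G}f_1(x)\,f_2(t_0x)$; writing $t_0x=z\,(t_0^{-1}x)$ with $z$ central and taking $f_1,f_2$ in a real-valued constituent $V_\chi$, on which $z$ acts by the scalar $\omega_\chi(z)$, one obtains $B(f_2,f_1)=\omega_\chi(z)\,B(f_1,f_2)$. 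Iterating forces $\omega_\chi(z)=\pm1$, and then the dichotomy of the first paragraph yields $\ep(\chi)=\omega_\chi(z)$ for every real-valued regular character $\chi$, which is the statement.

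I expect the main obstacle to be the input used in the second paragraph: that connectedness of $Z(\bG)$ forces $T$ to act transitively on the non-degenerate characters of $U$ --- equivalently, the existence of $t_0\in T$ with $\alpha(t_0)=-1$ for all simple $\alpha$ --- which is exactly where the hypothesis is needed and which requires care in the Frobenius-twisted types, where $F$ permutes the simple root subgroups and the relevant unipotent subgroups are products of the $U_\alpha$ over $F$-orbits. One must also verify that $t_0^{2}$ is genuinely central in $\bG$ and $F$-fixed, so that $\omega_\chi(z)$ is defined on the constituents of $\Gamma$, and check the $G$-equivariance of $\Phi$ together with the well-definedness of the coset sums.
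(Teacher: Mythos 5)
Your proposal is correct and follows essentially the same route as the source: this theorem is quoted from Prasad's paper, and the proof sketched there (and summarized in the paragraph before Theorem \ref{FScentralExt}) is exactly your argument --- reduce to finding $t_0\in\bT^F$ with ${}^{t_0}\psi=\psi^{-1}$, hence acting by $-1$ on the simple root spaces of $\bN$ with $t_0^2=z$ central, and then read off the symmetry type of the resulting invariant bilinear form on each self-dual multiplicity-one constituent of the Gelfand--Graev representation. The one piece you leave as an input, the existence of such a $t_0$ in $\bT^F$ (including the twisted types), is precisely Prasad's Theorem 3 and is where connectedness of the center is used, just as you indicate.
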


We now prove that the central character $\omega_{\chi}$, like the Frobenius-Schur indicator, is preserved under duality. 

\begin{lemma} \label{centraldual} Let $\chi$ be an irreducible character of $G = {\bf G}^F$.  Then $\omega_{\chi} = \omega_{\pm \chi^*}$.
\end{lemma}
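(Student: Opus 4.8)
The plan is to show that $\chi^{*}$ transforms under multiplication by a central element in exactly the same way $\chi$ does, and then to read off the central character. Concretely, I would prove that $\chi^{*}(zg) = \omega_{\chi}(z)\,\chi^{*}(g)$ for every $z$ in the center of $G$ and every $g \in G$. Since $\pm\chi^{*}$ is an irreducible character, this forces the scalar by which $z$ acts on $\pm\chi^{*}$ to be $\omega_{\chi}(z)$ (the fixed sign making $\pm\chi^{*}$ irreducible cancels in the ratio $(\pm\chi^{*})(zg)/(\pm\chi^{*})(1)$), i.e.\ $\omega_{\pm\chi^{*}}(z) = \omega_{\chi}(z)$; letting $z$ range over the center gives $\omega_{\chi} = \omega_{\pm\chi^{*}}$.

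First I would record a small point that the computation rests on: a central element $z$ of $G$ lies in every parabolic $P_{J}$. Indeed, each $\mathbf{P}_{J}$ contains a Borel subgroup, hence a maximal torus of $\mathbf{G}$; every maximal torus contains $Z(\mathbf{G})$; and since $Z(\mathbf{G})$ is connected one has $Z(G) = Z(\mathbf{G})^{F} \subseteq \mathbf{P}_{J}^{F} = P_{J}$. This is exactly what guarantees that whether an element belongs to $P_{J}$ is unaffected by multiplying it by $z$.

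Next, the two operations out of which $*$ is built—truncation $T_{P_{J}/N_{J}}$ and induction $(\,\cdot\,)^{G}$—each interact correctly with $z$. For truncation, formula (\ref{truncform}) together with the centrality of $z$ gives, for $h \in P_{J}$,
\[
T_{P_{J}/N_{J}}(\chi)(zh) = \frac{1}{|N_{J}|}\sum_{u \in N_{J}} \chi(uzh) = \frac{1}{|N_{J}|}\sum_{u \in N_{J}} \chi(zuh) = \omega_{\chi}(z)\, T_{P_{J}/N_{J}}(\chi)(h).
\]
For induction, write $\psi = T_{P_{J}/N_{J}}(\chi)$ and let $\dot\psi$ be its extension by $0$ to all of $G$; using the standard formula for $\psi^{G}$, the identity $x^{-1}(zg)x = z\,(x^{-1}gx)$, and the fact that $z \in P_{J}$ (so that $x^{-1}gx \in P_{J}$ if and only if $z\,x^{-1}gx \in P_{J}$), the displayed identity upgrades to $\dot\psi(z\,x^{-1}gx) = \omega_{\chi}(z)\,\dot\psi(x^{-1}gx)$ for all $x \in G$, whence $\psi^{G}(zg) = \omega_{\chi}(z)\,\psi^{G}(g)$. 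Summing over the $\rho$-stable $J \subseteq I$ against the signs $(-1)^{|J/\rho|}$ yields $\chi^{*}(zg) = \omega_{\chi}(z)\,\chi^{*}(g)$, and the argument concludes as in the first paragraph.

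I do not anticipate a serious obstacle: once the bookkeeping with the induction formula is in place, the whole thing is a direct manipulation, and the only delicate ingredient is the (standard) fact that the center lies inside every $P_{J}$, used above. A more structural alternative would be to invoke that Alvis--Curtis duality preserves rational Lusztig series and that the central character is constant on each such series, but the direct computation is self-contained and sidesteps that machinery.
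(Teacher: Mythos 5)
Your proposal is correct and follows essentially the same route as the paper: both rest on the observation that $Z(G)$ lies in every parabolic $P_J$, then factor $\omega_{\chi}(z)$ out of the truncation formula (\ref{truncform}) and out of each induced term in the defining sum for $\chi^*$. The only difference is cosmetic---you establish the full relation $\chi^{*}(zg)=\omega_{\chi}(z)\chi^{*}(g)$, whereas the paper only needs the case $g=1$, since $\omega_{\pm\chi^*}(z)=\chi^{*}(z)/\chi^{*}(1)$ for the irreducible character $\pm\chi^*$.
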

\begin{proof} First, $Z({\bf G})$ is contained in any Borel subgroup ${\bf B}$ of $G$, since ${\bf B}$ is a maximal closed connected solvable subgroup of ${\bf G}$.  So, $Z({\bf G})$ is contained in any parabolic subgroup ${\bf P}$ of ${\bf G}$.  It follows that $Z({\bf G})^F$ is contained in any of the parabolic subgroups $P_J$ of $G = {\bf G}^F$.  By \cite[Proposition 3.6.8]{Ca85}, $Z({\bf G})^F = Z({\bf G}^F)$, and so the center of $G = {\bf G}^F$ is contained in every parabolic subgroup of $G$.  Now, if $z \in Z(G)$, and $P_{J}$ is a parabolic subgroup of $G$, then it follows from applying Equation (\ref{truncform}) that
$$T_{P/N}(\chi)(z) = \omega_{\chi}(z) T_{P/N}(\chi)(1).$$
When evaluating $\chi^*(z)$, we may factor out $\omega_{\chi}(z)$ from each term in the sum, and obtain $\chi^*(z) = \omega_{\chi}(z) \chi^*(1)$.  It immediately follows that $\omega_{\pm \chi^*} = \omega_{\chi}$.
\end{proof}

Theorem \ref{FSdual} and Lemma \ref{centraldual} immediately imply that Prasad's Theorem \ref{prasaddual} may be extended to semisimple characters.  We may extend the result even further as follows.  Let $\bG$ is a connected reductive group over $\fifibar$ which is defined over $\fifi$, with corresponding Frobenius map $F$, so that $\bG$ is also defined over ${\mathbb F}_{q^r}$ for any positive integer $r$, with Frobenius map $F^r$  Then $\bG^{F^r} = \bG(\fifir)$ is the set of $\fifir$-points of $\bG$, and $\bG^{F} \subset \bG^{F^r}$.

In order to further extend Theorem \ref{prasaddual}, we briefly summarize the main idea of its proof as it appears in \cite{prasad}.  Let $\bB$ be a Borel subgroup of $\bG$ which is defined over $\fifi$ (and thus defined over $\fifir$ for any $r \geq 1$), let $\bT$ be a maximal torus of $\bG$ contained in $\bB$ which is defined over $\fifi$, and let $\bN$ be the unipotent radical of $\bB$.  Through \cite[Theorem 2]{prasad}, the proof of Theorem \ref{prasaddual} is reduced to proving the existence of an element $s \in \bT(\fifi)$ such that $s$ acts by $-1$ on all of the simple root spaces of $\bN$.  This is proven in \cite[Theorem 3]{prasad}, and the element $s$ has the property that $s^2 = z \in Z(\bG(\fifi))$.  We make note of the fact that this same $s$ which acts by $-1$ on all of the simple root spaces of $\bN$ satisfies $s \in \bT(\fifir)$, and $s^2 = z \in Z(\bG(\fifi)) \subset Z(\bG(\fifir))$.  Thus, the central element obtained when applying Theorem \ref{prasaddual} to $\bG(\fifi)$ is the same central element obtained when applying Theorem \ref{prasaddual} to $\bG({\mathbb F}_{q^r})$ for any positive integer $r$.  By applying this, along with Lemma \ref{centraldual}, we obtain the following generalization of Theorem \ref{prasaddual}.

\begin{thm} \label{FScentralExt} Let $\bG$ be a connected reductive group over $\fifibar$ with connected center, which is defined over $\fifi$ with Frobenius map $F$, and is thus defined over ${\mathbb F}_{q^r}$ with Frobenius map $F^r$ for any positive integer $r$.  Then there exists an element $z \in Z(\bG(\fifi))$ such that for any real-valued regular or semisimple character $\chi$ of $\bG({\mathbb F}_{q^r})$, we have $\ep(\chi) = \omega_{\chi}(z)$.
\end{thm}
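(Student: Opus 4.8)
The plan is to reduce Theorem \ref{FScentralExt} to Prasad's Theorem \ref{prasaddual} applied over the extension field, together with the duality invariance already established. First I would observe that since $\bG$ is defined over $\fifi$ with Frobenius $F$, it is automatically defined over $\fifir$ with Frobenius $F^r$, and $Z(\bG)$ is still connected when $\bG$ is viewed over $\fifir$ (connectedness of the center is a geometric condition on $\bG$, independent of the field of definition). Hence Theorem \ref{prasaddual} applies verbatim to $\bG(\fifir) = \bG^{F^r}$: there is a central element $z_r \in Z(\bG(\fifir))$ such that $\ep(\chi) = \omega_\chi(z_r)$ for every real-valued regular character $\chi$ of $\bG(\fifir)$. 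By Theorem \ref{FSdual} and Lemma \ref{centraldual}, since Alvis-Curtis duality over $\bG^{F^r}$ interchanges regular and semisimple characters while preserving both $\ep$ and $\omega_\chi$ (and preserving real-valuedness), the same element $z_r$ also satisfies $\ep(\chi) = \omega_\chi(z_r)$ for every real-valued semisimple character of $\bG(\fifir)$.

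The remaining point — and the one that requires the extra bookkeeping summarized just before the statement — is that $z_r$ can be taken to lie in $Z(\bG(\fifi))$ rather than merely in $Z(\bG(\fifir))$, so that a single element works uniformly for all $r$. For this I would recall the structure of Prasad's construction: fixing an $\fifi$-rational Borel $\bB$ with $\fifi$-rational maximal torus $\bT$ and unipotent radical $\bN$, the proof of Theorem \ref{prasaddual} (via \cite[Theorem 2, Theorem 3]{prasad}) produces an element $s \in \bT(\fifi)$ acting by $-1$ on every simple root space of $\bN$, and sets $z = s^2 \in Z(\bG(\fifi))$. Since $\bT(\fifi) \subseteq \bT(\fifir)$, this same $s$ is an element of $\bT(\fifir)$, it still acts by $-1$ on all simple root spaces (the root-space action is defined over $\fifi$), and $z = s^2 \in Z(\bG(\fifi)) \subseteq Z(\bG(\fifir))$. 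Therefore Prasad's machine, run over $\fifir$, can be fed this particular $s$, and yields the \emph{same} central element $z$ independent of $r$. Taking $z_r = z$ for all $r$ then gives the statement.

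The main obstacle is purely one of verifying that nothing in Prasad's argument forces a choice of $s$ (and hence $z$) that genuinely depends on $r$: one must check that the reduction through \cite[Theorem 2]{prasad} and the existence argument in \cite[Theorem 3]{prasad} only use the properties ``$s \in \bT(\fifir)$'' and ``$s$ acts by $-1$ on the simple root spaces of $\bN$'' and ``$s^2 \in Z(\bG(\fifir))$'', all of which are satisfied by the $\fifi$-rational $s$. Granting that — which is exactly the content of the paragraph preceding the theorem — the proof is a short assembly: apply Theorem \ref{prasaddual} over $\fifir$ with the distinguished $s$, conclude the regular case with $z \in Z(\bG(\fifi))$, and then transport to semisimple characters via Theorem \ref{FSdual} and Lemma \ref{centraldual}. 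I would write it essentially as: ``By the discussion above, the element $z \in Z(\bG(\fifi))$ satisfies $\ep(\chi) = \omega_\chi(z)$ for every real-valued regular character $\chi$ of $\bG(\fifir)$, by Theorem \ref{prasaddual} applied to $\bG$ over $\fifir$. If $\chi$ is instead a real-valued semisimple character of $\bG(\fifir)$, then $\pm\chi^*$ is a regular character of $\bG(\fifir)$, it is real-valued since duality commutes with complex conjugation, and by Theorem \ref{FSdual} and Lemma \ref{centraldual} we have $\ep(\chi) = \ep(\pm\chi^*) = \omega_{\pm\chi^*}(z) = \omega_\chi(z)$, completing the proof.''
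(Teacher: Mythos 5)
Your proposal is correct and follows essentially the same route as the paper: apply Prasad's theorem over $\fifir$, observe that the element $s \in \bT(\fifi) \subseteq \bT(\fifir)$ acting by $-1$ on the simple root spaces of $\bN$ yields the same central element $z = s^2 \in Z(\bG(\fifi))$ independent of $r$, and then transfer from regular to semisimple characters via Theorem \ref{FSdual} and Lemma \ref{centraldual}. The verification you flag as the ``main obstacle'' is precisely the content of the paragraph preceding the theorem in the paper, so nothing is missing.
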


\section{Applications} \label{Applications}

\subsection{Frobenius-invariant Deligne-Lusztig characters}
For a connected reductive group $\bG$ over $\fifibar$ with connected center, defined over $\fifi$ with Frobenius map $F$, consider an $F$-rational maximal torus $\bT$ of $\bG$, and let $\theta: \bT(\fifi) \rightarrow \cplx^{\times}$ be a character.  Let $R_{\bT, \theta}$ be the Deligne-Lusztig virtual character associated with the pair $(\bT, \theta)$, which is a virtual character of the group $\bG(\fifi)$, originally constructed in \cite{dellusz} (see also \cite{Ca85, dmbook}).  By \cite[Corollary 7.3.5]{Ca85}, for example, $\pm R_{\bT, \theta}$ is an irreducible character of $\bG(\fifi)$ if and only if $\theta$ is in general position, that is, if $\theta$ is not fixed by any non-identity element of the Weyl group $W(\bT)^F$.  Again, we use $\pm$ to mean the correct sign is chosen to have an irreducible character.  We need the following result.

\begin{lemma} \label{DLcentchar} Let $R_{\bT, \theta}$ be a Deligne-Lusztig virtual character of $\bG(\fifi)$.  Then for any central element $z \in Z(\bG(\fifi))$, we have $R_{\bT, \theta}(z) = \theta(z) R_{\bT, \theta}(1)$.
\end{lemma}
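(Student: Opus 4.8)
The plan is to evaluate $R_{\bT, \theta}$ at $z$ by means of the Deligne--Lusztig character formula. First I would collect the relevant structural facts. Since $\bG$ is connected reductive, $Z(\bG)$ is contained in every maximal torus of $\bG$ (as $Z(\bG) \subseteq C_{\bG}(\bT) = \bT$), so in particular $z \in Z(\bG(\fifi)) \subseteq \bT$. Moreover $Z(\bG)$ is connected by hypothesis, hence $Z(\bG(\fifi)) = Z(\bG)^F$ by \cite[Proposition 3.6.8]{Ca85}, and therefore $z \in \bT^F = \bT(\fifi)$. In particular $z$ is a semisimple element of $G = \bG(\fifi)$ whose Jordan decomposition is $z = z \cdot 1$, with semisimple part $z$ and unipotent part $1$.

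Next I would apply the character formula for Deligne--Lusztig virtual characters \cite[Theorem 7.2.8]{Ca85}, which expresses $R_{\bT, \theta}(su)$, for an element with Jordan decomposition $su$, as $1/|C^{\circ}_{\bG}(s)^F|$ times a sum, over those $x \in G^F$ with $x^{-1} s x \in \bT^F$, of $\theta(x^{-1} s x)$ times a Green function of $C^{\circ}_{\bG}(s)^F$ evaluated at $u$. Taking $s = z$ and $u = 1$: because $z$ is central, $C^{\circ}_{\bG}(z) = \bG$, every $G^F$-conjugate $x^{-1} z x$ equals $z \in \bT^F$, and therefore $\theta(x^{-1} z x) = \theta(z)$ for all $x \in G^F$. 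Thus $\theta(z)$ is constant over the summation and factors out, and what remains is exactly the same formula evaluated at $s = 1$, $u = 1$ (where $\theta(1) = 1$ and $C^{\circ}_{\bG}(1) = \bG$ as well), that is, $R_{\bT, \theta}(1)$. This yields $R_{\bT, \theta}(z) = \theta(z) R_{\bT, \theta}(1)$.

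I do not expect a genuine obstacle here; the only points needing care are that a central element of a connected reductive group is automatically semisimple and lies in $\bT$ (so the character formula applies and the conjugation terms collapse) and that $C^{\circ}_{\bG}(z) = \bG$, which makes the Green function contributions at $z$ and at $1$ literally identical. As an alternative that avoids invoking the character formula, one can argue geometrically: for $z \in Z(\bG)^F \subseteq \bT^F$, left translation by $z$ on the relevant Deligne--Lusztig variety coincides with the right $\bT^F$-translation by $z$, so $z$ acts as the scalar $\theta(z)$ on the $\theta$-isotypic part of the $\ell$-adic cohomology; since $z$ is central it commutes with all of $G^F$, so this scalar factors out of $R_{\bT, \theta}(zg)$ for every $g \in G^F$, giving $R_{\bT, \theta}(zg) = \theta(z) R_{\bT, \theta}(g)$ and in particular the stated identity. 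I would present the character-formula argument as the actual proof and merely remark on the cohomological alternative.
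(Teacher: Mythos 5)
Your proposal is correct and follows essentially the same route as the paper: both evaluate the Deligne--Lusztig character formula at the central (hence semisimple) element $z\in\bT^F$ and factor out $\theta(z)$; the paper cites Carter's value formula on semisimple elements (Proposition 7.5.3) together with the degree formula (Theorem 7.5.1), which is just the specialization of the general formula (Theorem 7.2.8) you invoke. The cohomological remark is a fine alternative but not needed.
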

\begin{proof}  Since any central element is semisimple and is contained in every torus, then by applying the character value formula for Deligne-Lusztig characters on semisimple elements given in \cite[Proposition 7.5.3]{Ca85}, we have
$$ R_{\bT, \theta}(z) = \frac{\ep_{\bT} \ep_{\bG}}{|\bT^F| |\bG^F|_p} \sum_{g \in \bG^F} \theta(z) = \frac{\ep_{\bT} \ep_{\bG} |\bG^F|_{p'}}{|\bT^F|} \theta(z), $$
where $\ep_{\bG}$ and $\ep_{\bT}$ denote $-1$ to the relative ranks of $\bG$ and $\bT$, respectively (see \cite[p. 197]{Ca85} for a definition).  From the formula for $R_{\bT, \theta}(1)$ given in \cite[Theorem 7.5.1]{Ca85}, we obtain $R_{\bT, \theta}(z) = \theta(z) R_{\bT, \theta}(1)$.
\end{proof}

If $\bT$ is an $F$-rational maximal torus of $\bG$, we now consider a character $\Theta: \bT(\fifisq) \rightarrow \cplx^{\times}$, and the Deligne-Lusztig character $R_{\bT, \Theta}$ of $\bG(\fifisq)$.  A class function $\chi$ of $\bG(\fifisq)$ is {\em Frobenius-invariant} if ${^F \chi} = \chi$, that is, if $\chi({^F g}) = \chi(g)$ for every $g \in \bG(\fifisq)$.  Let $\Nm_{\fifisq/\fifi}$ (or simply $\Nm$ when the fields are clear from the context) denote the norm map of $\fifisq$ down to $\fifi$, so that $\Nm(x) = x \cdot {^F x}$ for any $x \in \fifisq$.  Then $\Nm$ may be extended to any abelian subgroup of $\bG(\fifisq)$.  By \cite[Lemma 6.9]{gyoja}, $\pm R_{\bT, \Theta}$ is an irreducible Frobenius-invariant Deligne-Lusztig character of $\bG(\fifisq)$, if and only if $\Theta = \theta \circ \Nm_{\fifisq/\fifi}$, for some character $\theta: \bT(\fifi) \rightarrow \cplx$ such that $\pm R_{\bT, \theta}$ is an irreducible Deligne-Lusztig character of $\bG(\fifi)$.  

If $R_{\bT, \theta}$ is any Deligne-Lusztig character of $\bG^F$, then its conjugate is given by $R_{\bT, \bar{\theta}}$, from \cite[Proposition 7.2.3]{Ca85}, for example.  If we assume that $\pm R_{\bT, \theta}$ is an irreducible Deligne-Lusztig character of $\bG^F$, then it follows from \cite[Theorems 7.3.4 and 7.3.8]{Ca85} that it is real-valued if and only if there is an element of the Weyl group $W(\bT)^F$ whose action maps $\theta$ to $\bar{\theta}$.  We now have the following result.

\begin{thm} \label{DLthm} Let $\bG$ be a connected reductive group over $\fifibar$ with connected center, which is defined over $\fifi$ with Frobenius map $F$, and so defined over $\fifisq$ with the map $F^2$.  Let $\pm R_{\bT, \theta}$ be an irreducible real-valued Deligne-Lusztig character of $\bG(\fifi)$.  Let $\Theta = \theta \circ \Nm: \bT(\fifisq) \rightarrow \cplx^{\times}$.  Then $\pm R_{\bT, \Theta}$ is an irreducible, real-valued, Frobenius-invariant, Deligne-Lusztig character of $\bG(\fifisq)$, such that $\ep(\pm R_{\bT, \Theta}) = 1$.
\end{thm}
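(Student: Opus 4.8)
The plan is to reduce the indicator computation to Theorem~\ref{FScentralExt}, using the key feature that one and the same central element $z\in Z(\bG(\fifi))$ controls the Frobenius-Schur indicator of regular (and semisimple) characters over $\fifi$ and over $\fifisq$, together with the observation that the norm map sends central $\fifi$-points to their squares. The auxiliary assertions are quick: since $\pm R_{\bT,\theta}$ is irreducible, $\theta$ is in general position, so \cite[Lemma~6.9]{gyoja} shows directly that $\pm R_{\bT,\Theta}$, with $\Theta=\theta\circ\Nm$, is an irreducible Frobenius-invariant Deligne-Lusztig character of $\bG(\fifisq)$. That $\pm R_{\bT,\theta}$ is real-valued means, by \cite[Theorems~7.3.4 and 7.3.8]{Ca85}, that ${}^w\theta=\bar\theta$ for some $w\in W(\bT)^F$. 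Since such a $w$ commutes with $F$ as an automorphism of $\bT$, it commutes with $\Nm$, so ${}^w(\Nm(x))=\Nm({}^w x)$ for $x\in\bT(\fifisq)$ and hence ${}^w\Theta=({}^w\theta)\circ\Nm=\bar\theta\circ\Nm=\bar\Theta$; viewing $w$ inside the larger relative Weyl group $W(\bT)^{F^2}$ and applying the same criterion over $\fifisq$ shows that $\pm R_{\bT,\Theta}$ is real-valued.

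The main point is that a Deligne-Lusztig character in general position is a regular character, and likewise a semisimple character: for every pair $(\bT,\theta)$ one has $\langle\Gamma,R_{\bT,\theta}\rangle=\ep_{\bG}\ep_{\bT}=\pm1$ (equivalently, the associated rational Lusztig series is a singleton), so an irreducible $\pm R_{\bT,\theta}$ must be a constituent of $\Gamma$, and the same reasoning applies to $\pm R_{\bT,\Theta}$ over $\fifisq$. Thus both $\pm R_{\bT,\theta}$ and $\pm R_{\bT,\Theta}$ are real-valued regular characters, and Theorem~\ref{FScentralExt} furnishes a \emph{single} element $z\in Z(\bG(\fifi))\subseteq Z(\bG(\fifisq))$ with
\[
\ep(\pm R_{\bT,\theta})=\omega_{\pm R_{\bT,\theta}}(z)\qquad\text{and}\qquad\ep(\pm R_{\bT,\Theta})=\omega_{\pm R_{\bT,\Theta}}(z).
\]

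It remains to evaluate these central characters. Applying Lemma~\ref{DLcentchar} over $\fifi$ and over $\fifisq$ gives $\omega_{\pm R_{\bT,\theta}}(z)=\theta(z)$ and $\omega_{\pm R_{\bT,\Theta}}(z)=\Theta(z)$. Since $z\in Z(\bG(\fifi))\subseteq\bG^F$ we have ${}^F z=z$, whence $\Nm(z)=z\cdot{}^F z=z^2$, and therefore
\[
\ep(\pm R_{\bT,\Theta})=\Theta(z)=\theta(\Nm(z))=\theta(z^2)=\theta(z)^2=\bigl(\ep(\pm R_{\bT,\theta})\bigr)^2 .
\]
Since $\pm R_{\bT,\theta}$ is an irreducible real-valued character, $\ep(\pm R_{\bT,\theta})\in\{1,-1\}$, so the right-hand side equals $1$ and $\ep(\pm R_{\bT,\Theta})=1$. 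I expect the only step needing real care to be the identification of a general-position Deligne-Lusztig character with a regular (or semisimple) character, so that Theorem~\ref{FScentralExt} applies; this is standard but rests on the multiplicity-one pairing of $R_{\bT,\theta}$ with the Gelfand-Graev character. Everything else is a citation or the brief computation $\Nm(z)=z^2$, which is exactly what turns the possible sign $\theta(z)=\pm1$ into the definite value $\theta(z)^2=1$.
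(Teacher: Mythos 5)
Your proof is correct and follows essentially the same route as the paper: Gyoja's lemma for irreducibility and Frobenius-invariance, transport of the real-valuedness criterion through the norm map, Theorem~\ref{FScentralExt} with the single central element $z$ for both fields, Lemma~\ref{DLcentchar}, and the computation $\Theta(z)=\theta(z^2)=\theta(z)^2=1$. The only difference is that you spell out why an irreducible Deligne--Lusztig character is regular (via its multiplicity-one pairing with the Gelfand--Graev character), which the paper simply asserts.
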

\begin{proof} Let us denote $\chi = \pm R_{\bT, \Theta}$, and $\psi = \pm R_{\bT, \theta}$.  First, irreducible Deligne-Lusztig characters are both regular and semisimple, and so by Theorem \ref{FScentralExt}, there exists an element $z \in Z(\bG(\fifi))$ such that $\ep(\psi) = \omega_{\psi}(z)$.  By Lemma \ref{DLcentchar}, we have $\omega_{\psi}(z) = \theta(z)$, and so $\theta(z) = \pm 1$.  By the discussion in the paragraph preceding this Lemma, since $\Theta = \theta \circ \Nm$, then $\chi = \pm R_{\bT, \Theta}$ is a Frobenius-invariant irreducible character of $\bG(\fifisq)$.  Since $\pm R_{\bT, \theta}$ is assumed to be real-valued, then there is an element $w \in W(\bT)^F$ such that the action of $w$ takes $\theta$ to $\bar{\theta}$.  Since $w$ is $F$-invariant, then $w$ acts on $\Theta$ as follows, where $t \in \bT(\fifisq)$:
$$ \Theta(wtw^{-1}) = \theta(w \Nm(t) w^{-1}) = \bar{\theta}(\Nm(t)) = \bar{\Theta}(t).$$
Since the action of $w \in W(\bT)^F \subset W(\bT)^{F^2}$ takes $\Theta$ to $\bar{\Theta}$, then $\chi$ is real-valued.  So, by Theorem \ref{FScentralExt}, $\ep(\chi) = \omega_{\chi}(z)$, and by Lemma \ref{DLcentchar}, $\omega_{\chi}(z) = \Theta(z)$.  Now we have
$$ \ep(\chi) = \Theta(z) = \theta(\Nm(z)) = \theta(z \cdot {^F z}).$$
Since $z \in Z(\bG(\fifi))$, then $\Nm(z) = z^2$, and so we finally have $\ep(\chi) = \theta(z)^2 = 1$, since $\theta(z) = \pm 1$.
\end{proof}

\subsection{Finite unitary groups}

We now let $\bG = \GL(n, \fifibar)$.  Considering elements of $\bG$ as invertible $n$-by-$n$ matrices with entries from $\fifibar$, for $g = (a_{ij}) \in \bG$, define $\bar{g}$ by $\bar{g} = (a_{ij}^q)$, and let ${^t g} = (a_{ji})$ denote the transpose map.  We now define a Frobenius map $F$ on $\bG$ by
$$ F(g) = \left( \begin{array}{ccc}  &  & 1 \\  & \adots &   \\ 1 &  &  \end{array} \right) {^t \bar{g}}^{-1}\left( \begin{array}{ccc}  &  & 1 \\  & \adots &   \\ 1 &  &  \end{array} \right).$$
Then the group of $\fifi$-points of $\bG$ is defined to be the finite unitary group, which we denote by $\bG^F = \U(n,\fifi)$.  We comment that if we define the Frobenius map $\tilde{F}$ by $\tilde{F}(g) = {^t \bar{g}^{-1}}$, then in fact $\bG^{\tilde{F}} \cong \bG^{F}$, but we choose the Frobenius map $F$ so that, in particular, the standard Borel subgroup of $\bG$ is stable under $F$.

The center of $\U(n,\fifi)$ is the set of all scalar matrices of the form $\alpha I$ such that $\alpha \in \fifibar$ and  $\alpha^{q+1} = 1$, and so $\alpha \in \fifisq$.  As already noted, the standard Borel subgroup $\bB_0$, consisting of upper triangular matrices, is an $F$-rational subgroup of $\bG$, and contains the maximally split standard torus $\bT_0$, consisting of diagonal matrices, which is also $F$-rational.  It may be directly checked that if $\alpha \in \fifisq^{\times}$, then in the case that $n$ is even, the element 
$$ s = {\rm diag}(\alpha, \alpha^{-q}, \ldots, \alpha, \alpha^{-q})$$
satisfies $s \in \bT_0^{F}$.  Finally, we let $\bN_0$ denote the unipotent radical of the standard Borel subgroup $\bB_0$.  

In the following, part (1) follows directly from \cite[Theorem 7(ii)]{Ohm1}, but we also give a proof below.

\begin{thm} \label{FSunitary} Let $\chi$ be a real-valued semisimple or regular character of $\U(n, \fifi)$.  Then:
\begin{enumerate} 
\item If $n$ is odd or $q$ is even, then $\ep(\chi) = 1$.
\item If $n$ is even and $q \equiv 1(\text{mod } 4)$, then $\ep(\chi) = \omega_{\chi}(-I)$.
\item If $n$ is even and $q \equiv 3(\text{mod } 4)$, then $\ep(\chi) = \omega_{\chi}(\beta I)$, where $\beta = t^2$ and $t \in \fifisq$ is such that $t^{q+1} = -1$.
\end{enumerate}
\end{thm}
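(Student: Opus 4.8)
The plan is to regard $\U(n,\fifi)$ as $\bG^{F}$ for $\bG=\GL(n,\fifibar)$ with $F$ the twisted Frobenius map fixed above, and to apply Theorem~\ref{FScentralExt}. Since $\bG$ has connected center, that theorem (with $r=1$) provides a central element $z\in Z(\U(n,\fifi))$ with $\ep(\chi)=\omega_{\chi}(z)$ for every real-valued regular or semisimple character $\chi$; and, as recalled just before Theorem~\ref{FScentralExt}, one may take $z=s^{2}$ for any $s\in\bT_0^{F}$ that acts by $-1$ on each simple root space of $\bN_0$. Any such $s$ works, and two of them differ by a diagonal element on which every simple root is trivial, hence by an $F$-fixed central element of $\bG$, i.e.\ by an element of $Z(\U(n,\fifi))$; since $\omega_{\chi}$ is $\{\pm 1\}$-valued on $Z(\U(n,\fifi))$ for real-valued $\chi$, the quantity $\omega_{\chi}(s^{2})$ is independent of the choice of $s$. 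So the theorem comes down to exhibiting a convenient $s$ in each case and reading off $s^{2}$.

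For $\GL_{n}$ the simple roots are $\alpha_{i}\colon\mathrm{diag}(t_{1},\dots,t_{n})\mapsto t_{i}/t_{i+1}$, so the condition on $s=\mathrm{diag}(s_{1},\dots,s_{n})$ is $s_{i+1}=-s_{i}$ for $1\le i\le n-1$, while $F$-stability of a diagonal matrix reads $s_{j}s_{n+1-j}^{q}=1$ for all $j$. In part~(1) we can find such an $s$ with $s^{2}=I$: if $q$ is even, then $-1=1$ and $s=I$ will do; if $q$ is odd and $n$ is odd, then $s=\mathrm{diag}(1,-1,1,\dots,-1,1)$ lies in $\bT_0^{F}$ (because $n-1$ is even) and meets the root condition. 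In both cases $z=I$, so $\ep(\chi)=\omega_{\chi}(I)=1$, which is part~(1).

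Now suppose $n$ is even and $q$ is odd. Here the element $s=\mathrm{diag}(\alpha,\alpha^{-q},\dots,\alpha,\alpha^{-q})$ considered in the text lies in $\bT_0^{F}$, and it acts by $-1$ on every simple root space exactly when $\alpha^{q+1}=-1$; this is consistent, since it forces $\alpha^{2(q+1)}=1$ and $2(q+1)\mid q^{2}-1$ for $q$ odd, so $\alpha\in\fifisq^{\times}$. For such $\alpha$ one has $\alpha^{-2q}=\alpha^{2}$, so $s^{2}=\alpha^{2}I$, which is central because $(\alpha^{2})^{q+1}=1$. It remains to recognize $z=\alpha^{2}I$. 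If $q\equiv 1\pmod{4}$, then $q+1\equiv 2\pmod{4}$, and taking $\alpha$ of order $4$ in $\fifisq^{\times}$ (possible since $4\mid q^{2}-1$) gives $\alpha^{q+1}=\alpha^{2}=-1$, whence $z=-I$ and $\ep(\chi)=\omega_{\chi}(-I)$; this is part~(2). If $q\equiv 3\pmod{4}$, then $4\mid q+1$, so no element satisfying $\alpha^{2}=-1$ can satisfy $\alpha^{q+1}=-1$; but $x\mapsto x^{q+1}=\Nm_{\fifisq/\fifi}(x)$ maps $\fifisq^{\times}$ onto $\fifi^{\times}$, so some $t\in\fifisq^{\times}$ has $t^{q+1}=-1$, and then $z=t^{2}I=\beta I$ with $\beta=t^{2}$, which is part~(3).

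The one point that needs genuine care is the dichotomy just above: one has to see both that Prasad's element $s$ may be chosen diagonal of the displayed shape with $s^{2}$ central, and, crucially, that $s^{2}$ can be forced to equal $-I$ exactly when $q\equiv 1\pmod{4}$ --- equivalently, that $\gamma^{2}=-1$ has no solution among the $(q+1)$-st roots of unity precisely in that case, which is why the even-$n$ answer splits into the two subcases (2) and (3). Everything else is the routine root-datum bookkeeping and finite-field arithmetic indicated above, together with the repeatedly used observation that squares of central elements are invisible to $\omega_{\chi}$ on real-valued characters, which is what makes all of the choices made along the way harmless.
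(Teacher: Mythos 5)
Your proof is correct and follows essentially the same route as the paper: invoke Theorem \ref{FScentralExt} via Prasad's construction, exhibit an explicit $s\in\bT_0^F$ acting by $-1$ on the simple root spaces of $\bN_0$ in each case, and read off $z=s^2$ (your elements agree with the paper's up to a central factor). The well-definedness remark about the choice of $s$ is a correct addition not present in the paper, though it is not needed once the explicit elements are produced.
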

\begin{proof} For each part, we apply Theorem \ref{FScentralExt} along with the proof of Theorem \ref{prasaddual}, as discussed in the paragraph preceding Theorem \ref{FScentralExt}.  We must find an element $s \in \bT_0^F$ such that $s$ acts by $-1$ on each of the simple root spaces of $\bN_0$, and then we take $z = s^2$ as the central element in Theorem \ref{FScentralExt}.  In this case, this means we must find an $s$ such that, for every $h \in \bN_0$, if $h = (a_{ij})$ and $shs^{-1} = (b_{ij})$, then $b_{i, i+1} = -a_{i, i+1}$ for $1 \leq i \leq n-1$.

In part (1), first consider the case that $q$ is even.  Since we are in characteristic $2$, the elements $s$ and $z$ may be taken to be the identity, and hence $\ep(\chi)=1$.  Now, if $n$ is odd (and $q$ is odd or even), we let $s$ be the element
$$s = {\rm diag}(-1, 1, \ldots, 1, -1) \in \bT_0^F.$$
Then conjugation by $s$ on any element of $\bG$ sends each superdiagonal entry to its negative, as desired.  So, $s^2 = 1$, and $\ep(\chi) = 1$ for any regular or semisimple real-valued character $\chi$.

In case (2), when $n$ is even and $q \equiv 1(\text{mod } 4)$, if we let $\gamma \in \fifi$ be such that $\gamma^2 = -1$, then we let 
$$ s = {\rm diag}(\gamma, -\gamma, \ldots, \gamma, -\gamma).$$
Then $s \in \bT_0^{F}$, and $s$ has the desired property.  Since $s^2 = -I$, the result follows.

Finally, in case (3), when $n$ is even and $q \equiv 3(\text{mod } 4)$, first let $t \in \fifisq$ such that $t^{q+1} = -1$.  Such an element exists since the polynomial $x^{q+1} + 1$ is a factor of $x^{q^2 - 1} -1$, which splits completely over $\fifisq$.  Now, $t^{-q} = -t$, and so if 
$$ s = {\rm diag}(t, -t, \ldots, t, -t), $$
then $s \in \bT_0^{F}$, and $s$ again acts as $-1$ on each of the simple root spaces of $\bN_0$.  Letting $\beta = t^2$, the desired central element is $z = \beta I$.
\end{proof}

We note that in all of the examples given in \cite{Gow3, Gow4, prasad}, the central element which may be used to find the Frobenius-Schur indicator of a real-valued regular character (or in some cases, any character) is either $I$ or $-I$.  Case (3) of Theorem \ref{FSunitary} seems to be the first example given for which such a central element is not $\pm I$.

\end{document}